\date{}
\newtheorem {theorem}{\bf Теорема}
\newtheorem {corol}{\bf Следствие}
\newtheorem {lemma}{\bf Лемма}
\newtheorem {pro}{\bf Утверждение}
\def\p0{\parindent 0pt}
\title{{\bf
\normalsize  И.~Ю.~Могильных, Ф.~И.~Соловьева\\
\large ОБ ОТДЕЛИМОСТИ КЛАССА  ГОМОГЕННЫХ   СОВЕРШЕННЫХ ДВОИЧНЫХ
КОДОВ ОТ ТРАНЗИТИВНЫХ}
\thanks{Статья сдана в печать в журнал "Проблемы передачи информации". Работа  выполнена при  финансовой поддержке  гранта Российский Научный Фонд 14-11-00555.}}
\begin{document}

\maketitle

\begin{abstract}
На примере класса  совершенных двоичных кодов доказано
существование двоичных гомогенных нетранзитивных кодов. Тем самым,
с учетом ранее полученных результатов, установлена иерархическая
картина меры линейности  двоичных кодов, а именно имеет место
строгое содержание класса двоичных линейных кодов в классе
двоичных пропелинейных кодов, включающихся строго  в класс
двоичных транзитивных кодов, которые, в свою очередь, строго
содержатся в классе двоичных гомогенных кодов. В работе выводится
критерий транзитивности совершенных двоичных кодов ранга на
единицу больше чем ранг кода Хэмминга той же длины.
\end{abstract}

\section{Введение}

Наиболее близкими по целому ряду свойств  к линейным кодам
(особенно по строению групп автоморфизмов) являются пропелинейные
коды и  транзитивные, все определения см. ниже. Вопрос о
существовании транзитивных кодов, не являющихся пропелинейными,
 был впервые поставлен  в 2006 г. Пухолем, Рифой,
Ф.И.Соловьевой. Позднее, когда была получена классификация
совершенных двоичных кодов длины 15 (см. \cite{OP}) и перечислены
все транзитивные и гомогенные совершенные коды длины 15,
естественно возник вопрос о существовании бесконечной серии
двоичных гомогенных кодов, не являющихся транзитивными. На оба
вопроса  получены положительные ответы.   На первый вопрос ответ
получен в работе \cite{BMRS}, где  доказано, что известный
двоичный код Беста длины 10 с кодовым расстоянием 4, будучи
транзитивным, не является пропелинейным. Существование бесконечной
серии транзитивных непропелинейных совершенных кодов  доказано в
работе \cite{MS2014}:

\begin{theorem} Для любого $n\geq 15$ существуют совершенные двоичные транзитивные коды длины $n$, не являющиеся пропелинейными.
\end{theorem}

Следует отметить, что в \cite{MS2014} было доказано, что  только
один из 201 неэквивалентного  транзитивного совершенного кода
длины 15 является непропелинейным. Ответ на вопрос о существовании
двоичных гомогенных нетранзитивных кодов приводится в настоящей
статье на примере совершенных кодов. Тем самым структура вложения
классов упомянутых выше  кодов, близких к линейным, имеет вид

$$\emph{\textbf{L}} \subset\emph{\textbf{Prl}} \subset \emph{\textbf{Tr}}
\subset\emph{\textbf{Hom}},$$ где $\emph{\textbf{L}}$ -- класс
линейных двоичных кодов, $\emph{\textbf{Prl}}$ -- класс
пропелинейных двоичных кодов, $\emph{\textbf{Tr}}$ -- класс
транзитивных двоичных кодов; $\emph{\textbf{Hom}}$ -- класс
гомогенных двоичных кодов.

Приведем основные определения. Через $F^n$ обозначим  $n$-мерное
метрическое пространство всех двоичных векторов длины  $n$ с
метрикой Хэмминга. Произвольное подмножество векторов
 $C$ из  $F^n$ называется двоичным кодом длины  $n$.
 Код $C$ называется {\it совершенным двоичным кодом} длины $n$, исправляющим одну ошибку, если для любого вектора $x\in F^n$
 найдется единственный вектор  $y$ из $C$ на расстоянии один от  $x$. Без
ограничения общности будем рассматривать только {\it приведенные}
коды, т.е. коды, содержащие нулевое слово  $0^n$ длины $n$ (далее
для краткости будем опускать термины двоичный и приведенный).
Известно, что совершенные двоичные коды с расстоянием  3
существуют тогда и только тогда, когда $n=2^k-1,k>1$. Широко
известно, что  для группы автоморфизмов
  $\mbox{Aut}(F^n)$ пространства $F^n$ справедливо
$$\mbox{Aut}(F^n)= F^n \leftthreetimes S_n = \{(y, \pi) \mid
y \in F^n , \pi\in S_n   \},$$ где $\leftthreetimes$ -- полупрямое
произведение,  $S_n$ --  симметрическая группа подстановок  $n$
координат векторов из  $F^n$. {\it Группой автоморфизмов} Aut$(C)$
произвольного
 кода $C$ длины $n$ называется стабилизатор кода $C$ как множества по группе $\mbox{Aut}(F^n)$, т.е.
$$\mbox{Aut}(C)=\{(y , \pi) \mid  y + \pi(C) =C \}.$$ {\it Группой симметрий} кода $C$ называется множество
$\mbox{Sym}(C)=\{ \pi\in S_n \mid \pi(C)=C \}.$
 Очевидно, что $\mbox{Sym}(C)$ --
 подгруппа группы $\mbox{Aut}(C).$

 Код  $C$
называется {\it транзитивным}, если его группа автоморфизмов
содержит подгруппу, действующую транзитивно на всех его кодовых
словах. Если эта подгруппа регулярна, т.е. ее порядок совпадает с
мощностью кода, то такой код, следуя \cite{RBH1989}, называется
{\it пропелинейным}.  Для транзитивных кодов удобно пользоваться
следующим, эквивалентным приведенному выше, определением: для
каждого кодового слова $y$ из $C$ найдется подстановка
 $\pi$ из $S_n$ такая, что $(y,\pi )\in \, \mbox{Aut}(C),$
 что означает  $y
 + \pi (C)  = C,$ где $\pi $ может не принадлежать группе симметрий
 Sym$(C)$ кода
 $C.$
Многие классы известных кодов являются транзитивными,  см. обзор
результатов, касающихся транзитивных кодов, в параграфе 4  работы
\cite{Sobsor2013}.

{\it Система троек Штейнера} $STS(n)$ порядка $n$ определяется как
система сочетаний из $n$ элементов по три такая, что каждая
неупорядоченная пара элементов содержится в точности в одной
тройке. Известно, что совокупность носителей кодовых слов веса $3$
в любом приведенном двоичном совершен\-ном коде $C$ длины $n$
определяет систе\-му троек Штейнера порядка $n$. Для кодового
слова $y$ кода $C$ через $STS(C,y)$ будем обозначать следующую
систему троек Штейнера $\{supp(x+y): x\in C, d(x,y)=3\}$. Код $C$
называется {\it гомогенным}, если   для любого кодового слова
$y\in C$ система STS$(C,y)$ {\it изоморфна} STS$(C,0^n)$, то есть
найдется подстановка $\pi\in S_n$ такая, что
$\pi(STS(C,y))=STS(C,0^n)$.

Нетрудно видеть, что всякий транзитивный код является гомогенным.

\section{Строение группы вращений кодов Васильева}

{\it Ядром} Ker$(C)$ кода $C$ называется совокупность его {\it
периодов}, т.е. кодовых слов $x\in C$ таких, что $x+C=C.$
Рассмотрим  {\it группу вращений Rot$(C)$} и {\it транслятор
Tr$(C)$} кода $C$: \begin{center}Rot$(C)=\{ \pi\in S_n \mid
\exists y \in C :(\pi, y)\in $ Aut$(C) \},$\end{center}
\begin{center}Tr$(C)=\{ y\in C \mid \exists \pi \in S_n : (\pi,
y)\in $ Aut$ (C) \}.$\end{center} Обозначим через Rot$_y (C)$
 класс смежности в Rot$(C)$, связанный с фиксированным кодовым словом $y\in C$:
\begin{center}Rot$_y(C)=\{ \pi\in S_n  \mid (\pi,
y)\in $ Aut$(C) \}.$\end{center} Ясно, что Rot$_y(C)=\emptyset$
тогда и только тогда, когда $y\notin$ Tr$(C).$ Имеет место
следующее свойство, связывающее Rot$(C)$ и Tr$(C)$: \,\,
|Aut$(C)|=|$Sym$(C)|\cdot |$Tr$(C)|=|$Rot$(C)|\cdot |$Ker$(C)|.$
Легко показать справедливость следующих утверждений.
\begin{pro}\label{Rot,Sym} Для любого двоичного кода $C$ выполняется $$\mbox{Sym}(C) \leq\mbox{Rot}(C) \leq  \mbox{Sym(Ker}(C)).$$
\end{pro}


В работе \cite{ASH2005} исследована группа симметрий кодов
Васильева. Для получения основного результата данной статьи нам
потребуется изучить группу вращений кодов Васильева. Оказалось,
что ряд результатов, справедливых для группы симметрий кодов
Васильева, имеет место для группы вращений. Напомним необходимые
определения из \cite{ASH2005}.


{\it Линейной $i$-компонентой} (далее кратко {\it
$i$-компонентой}) $R_i^n$ будем называть линейную оболочку троек
кода $C$ длины $n$, содержащих $i$, $i \in \{1,2,\ldots,n\}$.
Заметим, что в случае кода Хэмминга длины $n$, $R_i^n$ является
его подкодом.

Пусть $C$ -- произвольный совершенный код длины $n$, $n=2^k-1$,
 $\lambda : C \rightarrow \{0,1 \}$ -- произвольная функция, удовлетворяющая
 $\lambda (0^n)= 0$. Рассмотрим коды $C_\lambda
=\{(y,\lambda(y),0^n)  \mid y\in C \}$  и
 $R_{n+1}^{2n+1}=
\{(x, |x|, x) \mid \, x\in F^n \},$ где $|x|=x_1+ \ldots +
x_n(\bmod \, 2).$ Оба кода имеют длину $2n+1$, код
$R_{n+1}^{2n+1}$ является $(n+1)$-компонентой. Пользуясь кодами
$C_\lambda $  и $R_{n+1}^{2n+1}$, определим двоичный совершенный
код Васильева \cite{V62}:
 \begin{equation}\label{Vasilievconstruction} V_C^{\lambda}= C_\lambda + R_n^{2n+1}=\{(x + y, |x| + \lambda
 (y), x) \mid \, x\in F^n, y\in C\} \end{equation}
длины $2n+1$.

Заметим, что в силу того, что компонента $R_{n+1}^{2n+1}$ является
подпространством ядра кода Васильева, то для любого $y\in
V_C^{\lambda}$ и $v\in R_{n+1}^{2n+1}$ верны следующие
соотношения:
$$STS(V_C^{\lambda},y)=STS(V_C^{\lambda},y+v),\,\,
y\in Tr(C) \mbox{ тогда и только тогда, когда } y+v \in Tr(C).$$

   Обозначим через $t_i$ транспозицию,
переводящую  $i$ в  $i+n+1$, $i\in I,$ где $I=\{1,2,\ldots,n\}.$
Для вектора $u\in F^n$ рассмотрим подстановку $\tau _u=\prod
_{i\in supp(u)} t_i$. Обозначим совокупность всех подстановок,
соответствующих $F^n$, через $G$, т.е. $G=\{ \tau _u \mid u\in
F^n\}.$ Для произвольной подстановки
$$\pi=\left(\begin{array}{cccc} 1 & 2 & \ldots & n \\ \pi(1) & \pi(2) &
\ldots  & \pi(n)
\end{array}\right)$$
 подстановка  $\sigma_{\pi}$, называемая  {\it дубликатором}, определяется следующим
 образом:
 $$\sigma_{\pi}=\left(\begin{array}{ccccccccc} 1 & 2 & \ldots & n & n+1 &  n+2 & n+3 & \ldots & 2n+1 \\
\pi(1) & \pi(2) & \ldots  & \pi(n) & n+1 &  \pi(1)+n+1 &
\pi(2)+n+1 & \ldots  & \pi(n)+n+1
\end{array}\right).$$

 Множество всех дубликаторов обозначается через
$D$, т.е. $D=\{ \sigma_{\pi} \mid \pi \in S_n\}.$
{\it Стабилизатор} $i$-ой координаты  группы вращений кода $C$
 обозначим через $\mbox{St}_{i}(\mbox{Rot}(C)).$
 \begin{lemma}\label{Lemma1}  Для любого кода Васильева  $V_C^{\lambda}$  справедливо
  $\mbox{St}_{n+1}(\mbox{Rot}(V_C^{\lambda}))\leq D \rightthreetimes
  G.$\end{lemma}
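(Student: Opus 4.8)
The plan is to prove that every $\pi \in \mbox{St}_{n+1}(\mbox{Rot}(V_C^\lambda))$ preserves the partition $P=\{\{i,i+n+1\}: i\in I\}$ of $\{1,\dots,2n+1\}\setminus\{n+1\}$ into pairs; once this is known the conclusion follows, since $D\rightthreetimes G$ is exactly the group of permutations fixing $n+1$ and respecting $P$. First I unwind the hypothesis: $\pi\in\mbox{Rot}(V_C^\lambda)$ yields a codeword $y\in V_C^\lambda$ with $y+\pi(V_C^\lambda)=V_C^\lambda$, that is $\pi(V_C^\lambda)=y+V_C^\lambda$. Since a coordinate permutation carries supports to supports and preserves Hamming distance, $\pi(STS(V_C^\lambda,c))=STS(\pi(V_C^\lambda),\pi(c))$ for every codeword $c$. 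Taking $c=0^{2n+1}$, using $\pi(0^{2n+1})=0^{2n+1}$ together with the translation identity $STS(y+V_C^\lambda,0^{2n+1})=STS(V_C^\lambda,y)$, I obtain $\pi\bigl(STS(V_C^\lambda,0^{2n+1})\bigr)=STS(V_C^\lambda,y)$.

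The core step is to show that for every $y\in V_C^\lambda$ the triples of $STS(V_C^\lambda,y)$ through the point $n+1$ are precisely $\{i,n+1,i+n+1\}$, $i\in I$, so that the pairing induced by $n+1$ in each such system is $P$. Here I use that $R_{n+1}^{2n+1}\subseteq\mbox{Ker}(V_C^\lambda)$: for each $i$ the word $(e_i,1,e_i)\in R_{n+1}^{2n+1}$ is a period of weight $3$ whose support is $\{i,n+1,i+n+1\}$. Consequently, for any codeword $y$ the word $y+(e_i,1,e_i)$ is again a codeword at distance $3$ from $y$, so $\{i,n+1,i+n+1\}$ is a triple of $STS(V_C^\lambda,y)$; this gives $n$ triples through $n+1$, none of which depends on $y$. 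Because $STS(V_C^\lambda,y)$ is a Steiner triple system on $2n+1$ points, the point $n+1$ lies in exactly $(2n+1-1)/2=n$ triples, so these periodic triples are all of them and the induced partition is $P$ for every $y$.

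Combining the two steps, $\pi$ fixes $n+1$ and maps the family of triples through $n+1$ in $STS(V_C^\lambda,0^{2n+1})$ bijectively onto the corresponding family in $STS(V_C^\lambda,y)$; as both families induce $P$, the permutation $\pi$ sends every pair $\{i,i+n+1\}$ to some pair $\{j,j+n+1\}$, i.e. $\pi$ preserves $P$. It then remains to identify the stabilizer of $n+1$ and $P$: the doubler $\sigma_\pi$ permutes the pairs, sending $\{i,i+n+1\}$ to $\{\pi(i),\pi(i)+n+1\}$, while $\tau_u$ swaps the two coordinates within each pair indexed by $supp(u)$; hence $D\rightthreetimes G$ lies in this stabilizer, and since $|D\rightthreetimes G|=n!\,2^n$ equals the order of the group of all permutations of $2n$ points preserving a partition into $n$ pairs, $D\rightthreetimes G$ is the whole stabilizer. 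Therefore $\pi\in D\rightthreetimes G$, as claimed.

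I expect the delicate point to be the middle step. The counting argument makes it clean, but it rests on two facts that must be verified carefully: that the weight-$3$ words $(e_i,1,e_i)$ genuinely lie in $\mbox{Ker}(V_C^\lambda)$, so that they produce triples in every $STS(V_C^\lambda,y)$ and not merely for $y=0^{2n+1}$, and that $STS(V_C^\lambda,y)$ is indeed a Steiner triple system of order $2n+1$ so that the replication number $n$ applies. The uniformity in $y$, which is what ultimately forces $\pi$ to respect $P$ rather than merely permuting the triples of a single system, is the feature that drives the whole argument.
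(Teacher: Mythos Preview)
Your proof is correct. The paper does not give a full argument here; it simply says the proof is analogous to Proposition~3 of \cite{ASH2005}, replacing $\mbox{Sym}(V_C^\lambda)$ by $\mbox{Rot}(V_C^\lambda)$ and invoking Proposition~\ref{Rot,Sym}, namely $\mbox{Rot}(V_C^\lambda)\le\mbox{Sym}(\mbox{Ker}(V_C^\lambda))$. That route works because the kernel is linear and contains $R_{n+1}^{2n+1}$, so a permutation in $\mbox{Sym}(\mbox{Ker}(V_C^\lambda))$ fixing $n+1$ must permute the weight-$3$ kernel words through $n+1$, which are exactly the $(e_i,1,e_i)$, and hence preserves $P$.

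Your argument reaches the same conclusion without passing through Proposition~\ref{Rot,Sym}: you compare $STS(V_C^\lambda,0^{2n+1})$ with $STS(V_C^\lambda,y)$ directly and exploit that the triples through $n+1$ are the same in \emph{every} such system because they come from periods. The underlying combinatorial fact (the $(e_i,1,e_i)$ are periods and exhaust the $n$ triples through $n+1$) is identical; the difference is packaging. The paper's version is shorter because it outsources the work to \cite{ASH2005} and the general inclusion $\mbox{Rot}\le\mbox{Sym}(\mbox{Ker})$, while yours is self-contained and makes explicit why uniformity in $y$ is exactly what is needed to handle $\mbox{Rot}$ rather than just $\mbox{Sym}$.
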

  Доказательство этой леммы аналогично доказательству предложения 3 из  \cite{ASH2005}, достаточно заменить $Sym(V_C^{\lambda})$ на $Rot(V_C^{\lambda})$ и использовать утверждение \ref{Rot,Sym}.
Нам потребуется также следующая лемма (см. предложение 4 в работе
\cite{ASH2005}):
\begin{lemma}\label{equal} 
Для произвольных кода $C$ длины $n$ и определенной выше функции
$\lambda$
 выполняется  $\tau _u
((y,\lambda (y), 0^n))= (y,\lambda (y), 0^n) + (y\ast u,0,y\ast
u)$ для любого $y \in C$, где   $y\ast u =(y_1 u_1,\ldots ,y_n
u_n).$
 \end{lemma}




\begin{theorem}\label{rotn+1}  Пусть $V_C^{\lambda}$ -- произвольный код
Васильева, $z=(y^{\prime},\lambda(y'),0^{n})\in V_C^{\lambda}$.
Подстановка $\rho_{z}$ принадлежит
$\mbox{St}_{n+1}\mbox{(Rot}_{z}(V_C^{\lambda}))$ тогда и только
тогда, когда она может быть представлена композицией
$\rho_{z}=\sigma_{\pi_{y^{\prime}}} \circ \tau_u$ для некоторых
$\pi_{y^{\prime}} \in \mbox{Rot}_{y^{\prime}}(C)$ и $u\in F^n$
таких, что для любого $y\in C$  выполняется соотношение
\begin{equation}\label{Compatibily} \lambda (y^\prime) + \lambda
(y) +\lambda(y^\prime + \pi_{y^{\prime}}(y))=u\, \cdotp
y,\end{equation} где $u\, \cdotp y$ --  скалярное произведение
векторов  $u$ и $y$ из $F^n$. \end{theorem} Заметим, что при
$y^\prime=0$ справедливо
$\mbox{St}_{n+1}(\mbox{Rot}_{y^{\prime}}(V_C^{\lambda}))=
\mbox{St}_{n+1}(\mbox{Sym}(V_C^{\lambda}))$, равенство
(\ref{Compatibily}) преобразуется в равенство
$\lambda(y)+\lambda(\pi(y))=u\cdot y$ (см. определение
$\lambda$-согласованности в \cite{ASH2005}).

\begin{proof}
 Рассмотрим произвольную подстановку $\rho_{z}\in \mbox{St}_{n+1}\mbox{(Rot}_{z}(V_C^{\lambda}))$, где
$z=(y',\lambda(y'),0^n)$. По лемме \ref{Lemma1} имеем
$\rho_{z}=\sigma_{\pi_{y^{\prime}}} \circ \tau _u$, где $u$ --
некоторый вектор $F^n$, а $\pi_{y^{\prime}}$ некоторая подстановка
из $S_n$. Покажем, что $\pi_{y^{\prime}}\in Rot_{y'}(C)$ и для
всех $y$ выполнено равенство (\ref{Compatibily}).

Так как $z+\rho_z(V_C^{\lambda})=V_C^{\lambda}$, то для любого
$y\in C$ имеем
$$(y',\lambda(y'),0^n)+\sigma_{\pi_{y^{\prime}}}\circ \tau
_u(y,\lambda(y),0^{n})\in V_C^{\lambda}.$$
 Согласно Лемме \ref{equal} и определению дубликатора, имеем $\sigma_{\pi_{y'}}\circ \tau
_u(y,\lambda(y),0^{n})=(\pi_{y'}(y+u\ast y),\lambda(y),
\pi_{y'}(u\ast y))$. Следовательно для всех $y\in C$ вектор\\
\noindent
 $(y^{\prime},\lambda(y^{\prime}),0^n)+\sigma_{\pi_{y^{\prime}}}\circ \tau
_u(y,\lambda(y),0^{n})=$\\
$(y^{\prime},\lambda(y'),0^n)+(\pi_{y^{\prime}}(y),\lambda(y)+u\cdot
y,0^n)+(\pi_{y^{\prime}}(y\ast u),u\cdot y,\pi_{y^{\prime}}( y\ast
u))=$\\
$(y^{\prime}+\pi_{y^{\prime}}(y),\lambda(y')+\lambda(y)+u\cdot
y,0^n) + (\pi_{y^{\prime}}(y\ast u),u\cdot y,\pi_{y^{\prime}}(
y\ast u))$

\noindent принадлежит $V_C^{\lambda}$. В силу того, что код
Васильева $V_C^{\lambda}$ представляет собой некоторую
совокупность классов смежности по компоненте $R_{n+1}^{2n+1}$, то
прибавление вектора $(\pi_{y^{\prime}}(y\ast u),u\cdot
y,\pi_{y^{\prime}}( y\ast u))$, принадлежащего $R_{n+1}^{2n+1}$, к
вектору
$(y^{\prime}+\pi_{y^{\prime}}(y),\lambda(y^{\prime})+\lambda(y)+u\cdot
y,0^n)$ никак не влияет на свойство последнего принадлежать коду
$V_{C}^{\lambda}$ для любого $y$ из кода $C$. Отсюда заключаем что
$\pi_{y^{\prime}}$ принадлежит $Rot_{y^{\prime}}(C)$, а равенство
(\ref{Compatibily}) выполняется для всех $y$.
\end{proof}

Заметим, что в работе \cite{KP2014} Д.С.\,Кротов и В.Н.\,Потапов
предположили, что транзитивные коды ранга $n-log(n+1)+1$ следует
искать в классе кодов Васильева с функцией, удовлетворяющей
некоторому равенству, эквивалентному равенству
(\ref{Compatibily}), однако объяснения этому факту приведено не
было. Также выполнение равенства (\ref{Compatibily}) при
$\pi_{y^{\prime}}=id$ для всех $y, y'$ эквивалентно определению
квадратичной функции, рассмотренной в той же работе.

  \begin{corol} Пусть $\lambda$ -- нелинейная булева функция на коде Хэмминга $H$. Тогда
$y' \in Tr(V_H^{\lambda})$ тогда и только тогда, когда
 найдутся  $\pi \in \mbox{Sym}(H)$, $u\in F^n$ такие, что для всех $y\in H$ выполнено $\lambda (y^{\prime}) + \lambda (y) +
\lambda(y^{\prime} + \pi_{y^{\prime}}(y))=u\, \cdotp y.$

\end{corol}
\begin{proof}
Пусть $\rho_z\in \mbox{Rot}_z(V_H^{\lambda})$. Заметим, что код,
полученный из $V_H^{\lambda}$ удалением $j$-й координаты будет
линейным тогда и только тогда, когда $j=(n+1)/2$. Следовательно,
рассматривая равенство $z+\rho_z(V_H^{\lambda})=V_H^{\lambda}$,
приходим к выводу
$\mbox{Rot}_z(V_H^{\lambda})=St_{n+1}(\mbox{Rot}_z(V_H^{\lambda}))$,
что в силу теоремы \ref{rotn+1} дает требуемое.
\end{proof}

\section{Гомогенные совершенные коды ранга $n-log(n+1)+1$}
В этом разделе докажем существование бесконечной серии гомогенных
нетранзитивных совершенных кодов длины $n$ для каждого допустимого
$n\geq 15$ ранга $n-log(n+1)+1$. Построение этих кодов базируется
на существовании гомогенных нетранзитивных совершенных кодов длины
15.

Для дальнейшего нам потребуется  конструкция системы троек
Штейнера Ассмуса и Маттсона \cite{AM} и ее связь с конструкцией
Васильева, напомним их. Пусть $S$ является STS$(n)$ и
$\theta:S\rightarrow\{0,1\}$ -- произвольная булева функция на
тройках $S$. Определим
 $S^{\theta}$ -- систему STS$(2n+1)$ следующим образом:
 \begin{itemize}
    \item тройки $\{i,n+1,i+n+1\}$ принадлежат $S^{\theta}$ для
    любого $i\in\{1,\ldots,n\};$
    \item если $\theta(\{i,j,k\})=0$, то тройки $\{i,j,k\}, \{i,j+n+1,k+n+1\}, \{k,i+n+1,j+n+1\}, \{j,i+n+1,k+n+1\}$ принадлежат $S^{\theta};$
    \item если $\theta(\{i,j,k\})=1$, то тройки $
    \{i+n+1,j+n+1,k+n+1\}, \{i,j,k+n+1\}, \{j,k,i+n+1\}, \{i,j,k+n+1\}$ принадлежат $S^{\theta}$.
\end{itemize}


\begin{pro}\label{prop} Пусть $C$ является совершенным кодом,
$\lambda:C\rightarrow\{0,1\}$, $z=(x+y,|x|,x)$ является кодовым
словом кода $V_C^{\lambda}$. Тогда $STS(V_C^{\lambda},z)$ есть
$STS(C,y)^{\theta}$, где
$\theta(supp(y+y'))=\lambda(y)+\lambda(y')$, для $y'\in C$, таких
что $d(y',y)=3$.
\end{pro}

\subsection{Гомогенные нетранзитивные совершенные коды длины 15}

Для $n=15$  были исследованы все совершенные коды ранга 12,
оказалось, что среди них существует всего два гомогенных
нетранзитивных совершенных кода -- это коды, обозначаемые
$V22^{1}$ и $V3^{1}1$ согласно классификации С.~А.~Малюгина
\cite{Mal} двоичных совершенных кодов длины 15, полученных
свитчингами из кода Хэмминга той же длины.

Произвольное кодовое слово $x$ будем  задавать его носителем
$supp(x)=\{i|x_i=1\}$. Пусть $H$ -- код Хэмминга длины 7,
порожденный векторами $\{1,2,3\},\{1,4,5\},\{1,6,7\},\{2,4,6\}.$

Код $V22^{1}$ --  код Васильева $V_H^{\lambda}$, см.
(\ref{Vasilievconstruction}), где
$\lambda(0^7)=\lambda(\{1,6,7\})=\lambda(\{1,3,5,7\})=\lambda(1^7)=0$,
на остальных кодовых словах кода $H$ значение функции $\lambda$
равно 1.

Код $V3^{1}1$ -- это код Васильева $V_H^{\lambda}$, где
$\lambda(0^7)=\lambda(\{1,6,7\})=\lambda(\{2,4,6\})=\lambda(\{4,5,6,7\})=0$,
на прочих кодовых словах кода $H$ значение  $\lambda$ равно 1.

\begin{lemma}  \label{HomoV3^{1}1}  Коды $V3^{1}1$ и $V22^{1}$ являются  гомогенными.
   \end{lemma}
\begin{proof}
Заметим, что определенные выше функции $\lambda$ на коде $H^7$
обладают следующим свойством: для любого кодового слова $y\in H^7$
выполняется $|\{y'\in H^7: d(y',y)=3,\lambda(y)=\lambda(y')\}|$
принимает следующие значения:
1, 2, 5, 6. 
 Принимая во внимание утверждение \ref{prop} покажем,
что $S^{\theta}$ изоморфна $S^{\theta'}$, где $\theta$ и $\theta'$
-- произвольные булевы функции на системе троек Штейнера $S$
порядка 7 c 1, 2, 5 или 6 нулями.

Вначале докажем утверждение для функций с одним нулем или двумя
нулями.
 Если $\theta$ и $\theta'$ имеют одинаковое число нулей, то в силу 2-транзитивности
группы симметрий системы троек Штейнера порядка 7 (как двоичного
кода) найдется подстановка $\pi$, переводящая нули $\theta$ в нули
$\theta'$. Тогда подстановка $\sigma_{\pi}$ переводит тройки
$S^{\theta}$ в тройки $S^{\theta'}$.

Пусть $\theta$ имеет один ноль, а $\theta'$ имеет два  нуля. В
силу сказанного выше, без ограничения общности все тройки, на
которых  функции $\theta$ и $\theta'$ принимают значение ноль,
являются тройками системы  $S$, которые имеют один общий элемент,
скажем $i$.

 Заметим, что
 тройки $\{i,j,k\}, \{i,j+8,k+8\}, \{k,i+8,j+8\},
\{j,i+8,k+8\}$ переходят в
  тройки $\{i+8,j+8,k+8\}, \{i,j,k+8\}, \{j,k,i+8\}, \{i,j,k+8\}$ с помощью транспозиции $t_i=(i,i+8)$. Так как эти совокупности троек есть в точности совокупности,
    индуцируемые тройкой $\{i,j,k\}$ в
конструкции Ассмуса и Маттсона при $\theta(\{i,j,k\})=0$ или 1, то
подстановка $(i,i+8)$ переводит все тройки $S^{\theta}$,
содержащие $i$, в тройки $S^{\theta'}$, содержащие $i+8$.
Следовательно, $t_i(S^{\theta})=S^{\theta'}$. 

Случай когда $S^{\theta}$ является системой троек, где $\theta$
принимает 5 или 6 нулей, сводится к рассмотренному выше случаю с
одним или двумя нулями для функции $\theta$ с помощью подстановки
$\tau_{1^7}$, которая меняет местами координаты $i$ и $i+8$ для
любого $i\in\{1,\ldots,7\}$. При этом система троек
$\tau_{1^7}(S^{\theta})$ получается из $S$  применением функции c
одним нулем или двумя нулями.
\end{proof}

\begin{lemma}  \label{TrV3^{1}1}  Коды $V3^{1}1$ и $V22^1 $ не являются транзитивными.
 \end{lemma}
\begin{proof}  Напомним, что $Rot(H)=Sym(H)$. Докажем, что для кода $V3^{1}1$ нарушается
равенство (\ref{Compatibily}), где $\pi$ пробегает $Sym(H)$, а
$y^{\prime}=1^7$.
 Поскольку $\lambda(1^7)=1$, условие
(\ref{Compatibily}) преобразуется в следующее
\begin{equation}\label{CompatibilyV3^{1}1}  \lambda(1^7 + \pi (y))=u\, \cdotp y + \lambda (y) + 1.\end{equation}
 Рассмотрим правую часть (\ref{CompatibilyV3^{1}1}). Заметим, что равенство $u\, \cdotp y =0$ задает гиперпространство $U=\{y:y \cdotp u =0\}$ в коде Хэмминга $H$ длины 7, число таких гиперпространств равно числу ненулевых точек в $H$, т.е. равно 15. Справедливы следующие случаи:\\
 a)  вектор $u$ принадлежит коду $H^{\bot}$ -- ортогональному к коду $H$;\\
b) имеем 7 подпространств -- $i$-компонент кода $H$, $i\in
\{1,2,\ldots,7\}$, имеющих весовое распределение, состоящее из
нулевого вектора,
трех кодовых слов веса 3, трех кодовых слов веса 4 и единичного кодового слова  кода $H$,  например $R_1^7=<\{1,2,3\},\{1,4,5\},\{1,6,7\}>$;\\
 c) имеем 7 подпространств, содержащих  нулевой вектор, четыре кодовых слова веса 3 и три кодовых слова веса 4  кода $H$, например
 $<\{1,6,7\},\{2,4,6\},\{4,5,6,7\}>.$

 В случае а) в силу $u\, \cdotp y =0$ соотношение (\ref{CompatibilyV3^{1}1}) примет вид $\lambda(1^7 + \pi (y))= \lambda (y) + 1,$ из которого следует, что функция $\lambda$ имеет одинаковое число 0 и 1, что противоречит ее определению.

  В случае b) снова исследуем соотношение  (\ref{CompatibilyV3^{1}1}).
Без ограничения общности полагаем, что $R_1^7\subset U$.
Рассмотрим обе части равенства (\ref{CompatibilyV3^{1}1}) при $y$,
пробегающих множество кодовых слов веса 3 в коде $H$. С одной
стороны, $1^7+\pi(y')$ пробегает множество векторов веса 4,
следовательно $\lambda(1^7+\pi(y'))$ равно 0 только в одном случае
и 1 в шести. С другой стороны, $\lambda (y) + 1$ будет равно 0 на
пяти кодовых словах, 1 -- на двух кодовых словах, $u\, \cdotp y
=0$ для трех кодовых слов $\{1,2,3\},\{1,4,5\},\{1,6,7\}$.
Следовательно, для кодовых слов $y$ таких, что $\lambda(y)=0$,
должно необходимо выполняться $u\, \cdotp y =0$. Аналогичное
утверждение получим в случае, когда $y$ пробегает множество
кодовых слов веса 4 в коде $H$: $\lambda(1^7 + \pi (y))$ принимает
значение 0 на двух кодовых словах и 1 на пяти. Другими словами,
все нули функции $\lambda(y)$, заданные на коде $H$, должны
принадлежать $U$, т.е. случай b) невозможен.

Для каждого из 7 подпространств в случае c) снова рассмотрим
вектор $y$, пробегающий множество кодовых слов веса 3 в коде $H$ и
убеждаемся, что $u\, \cdotp y + \lambda (y) + 1$ будет равно 0 как
минимум на двух кодовых словах, в то время как левая часть
$\lambda(1^7 + \pi (y))$ принимает значение 0 только на одном
кодовом слове и 1 -- на шести кодовых словах, противоречие.

Нетранзитивность кода $V22^1$ доказывается аналогично. \end{proof}

\section{Бесконечная серия гомогенных нетранзитивных кодов}

Покажем, что с помощью конструкции  Васильева из гомогенных
совершенных кодов можно получать гомогенные коды большей длины.

\begin{theorem}\label{VasilievHomogenious}  Если $C$ -- произвольный гомогенный совершенный код, то код
Васильева $V_C^{\lambda}$ при $\lambda\equiv 0$ является
гомогенным. \end{theorem}
 \begin{proof} Обозначим код Васильева $V_C^{\lambda}$ при $\lambda\equiv 0$ через $V_C^{0}$. Пусть $z, z'\in V_C^{0}$. Рассмотрим две системы
 $STS(V_C^{\lambda},z)=STS(C,y)^{0}$ и
 $STS(V_C^{\lambda},z')=STS(C,y')^{0}$, где $z=(y+x,|x|,x)$, $z'=(y'+x',|x'|,x')$.
 Пусть $\pi(STS(C,y))=STS(C,y')$. Тогда несложно видеть, что
 дубликатор $\sigma_{\pi}$ подстановки $\pi$ переводит
 $STS(C,y)^{0}$ в $STS(C,y')^{0}$. Действительно,
$\sigma_{\pi}(\{i,n+1,i+n+1\})=\{\pi(i),n+1,\pi(i)+n+1\},$
$\sigma_{\pi}(\{i,j,k\})=\pi (\{i,j,k\})\in STS(C,y')$,
$\sigma_{\pi}(\{i+n+1,j+n+1,k\})=\{\pi(i),\pi(j)+n+1,\pi(k)+n+1\}\in
STS(C,y')^{0}$.
  \end{proof}


Компоненту $R_j^n$ кода Хэмминга $H^{n}$ длины $n$ назовем {\it
протыкающей} нули и единицы функции $\lambda$, если   $R_j^n$
содержит и нули и единицы функции $\lambda$.

Рассмотрим булеву функцию $\lambda$, определенную на коде Хэмминга
$H^{n}$ длины $n$. Рекуррентно определим функцию $\lambda_N$ на
коде $H^N$, положив $\lambda_n\equiv\lambda$ и для $y\in
H^{(N-1)/2}$
$$\lambda_N((y,0^{(N+1)/2)}) +
R^N_{(N+1)/2})=\lambda_{(N-1)/2}(y).$$ Пусть $C$ -- совершенный
код длины $N$, полученный из кода $V_{H^{n}}^{\lambda}$ длины
$2n+1$ посредством $s$-кратного применения конструкции Васильева с
нулевой функцией,
$s=\log(N+1)-\log(2n+2)$ (далее для кода Васильева важно будет
указывать, какой длины код Хэмминга и какая функция использовались
для построения этого кода). Тогда верно следующее сведение

\begin{lemma}\label{pierce} 1. Код $C$ длины $N$ эквивалентен коду
Васильева $V_{H^{(N-1)/2}}^{\lambda_{(N-1)/2}}$.

 2. Если $R^{(N-1)/2}_j$ протыкает нули и единицы функции $\lambda_{(N-1)/2}$, то $R^{N}_j$ и $R_{j+(N+1)/2}^N$
 протыкают нули и единицы $\lambda_N$.
\end{lemma}
\begin{proof} 1. Код $C$ длины $N$ имеет ранг на единицу больше, чем ранг кода Хэмминга той же длины.
Известно, что всякий такой код эквивалентен коду Васильева
$V_{H}^{\lambda_{(N-1)/2}}$ для некоторой функции $\lambda$ и
некоторого кода Хэмминга $H$ длины $(N-1)/2$. Однако для
доказательства теоремы  \ref{iterative_Homo} нам важно убедиться,
что исходный код Хэмминга длины $n$ содержится как подкод в коде
Хэмминга последней итерации при построении кода $C$. При $s=1$
имеем $N=4n+3$ и подстановка

$$\varphi=\prod_{i=0}^{n}(n+i,3n+2+i,2n+2+i),$$
\normalsize являющаяся произведением циклов длины 3, фиксирующая
каждую из первых $(n-1)/2$ координатных позиций, переводит код
$V_{H^n}^{\lambda}$ в код $C$.
Для $s>1$ аналогичным образом, используя индукцию по $s$, можно
выписать соответствующую подстановку.

2. Рассмотрим код Хэмминга $H^{(N-1)/2}$. Пусть $j\leq (N-1)/2$,
$y\in R^{(N-1)/2}_j$. Тогда так как $(y,0^{(N+1)/2})\in R^{N}_j$,
то функция $\lambda_N$ является $R^{N}_j$-протыкающей.

Пусть $y=\sum_{m=1,\ldots,r}\{i_m,j,k_m\}\in R_j^{(N-1)/2}$. Тогда
вектор
$y'=y+\sum_{m=1,\ldots,r}\{k_m,(N+1)/2,k_m+(N+1)/2\}+\{j,(N+1)/2,j+(N+1)/2\}$
принадлежит компоненте $y+R^N_{(N+1)/2}$, следовательно
$\lambda_N(y')=\lambda_N(y)$. С другой стороны,
$y'=\sum_{m=1,\ldots,r}\{i_m,k_m+(N+1)/2,j+(N+1)/2\}$ принадлежит
компоненте $R^N_{j+(N+1)/2}$ и следовательно $\lambda_N$ содержит
 и нули и единицы в компоненте $R^N_{j+(N+1)/2}$, так как  обладает таким свойством в $R^N_{j}$.
\end{proof}

\begin{theorem} \label{iterative_Homo}
Пусть $\lambda$ -- нелинейная функция, протыкающая компоненты,
 число нулей и единиц $\lambda$ различны. Тогда
$Tr(V_{H^{N}}^{\lambda_{N}})=
Tr(V_{H^{(N-1)/2}}^{\lambda_{(N-1)/2}})+R_{(N+1)/2}^{N}$.
\end{theorem}
\begin{proof}
В силу определения, функция $\lambda_{N}$ не является линейной,
следовательно  cогласно Следствию 1 для выполнения  $y'\in
Tr(V_{H^{N}}^{\lambda_{N}})$ необходимо и достаточно, чтобы
нашлись подстановка $\pi\in S_{N}$ и вектор $u\in F^N$ для любого
$y\in H^N$ такие, что

\begin{equation}\label{Treq}
\lambda_N(y' + \pi_{y^{\prime}} (y))=\lambda_N(y')+ \lambda_N (y)
+ u\, \cdotp y.
\end{equation}


Рассмотрим  линейное пространство
$L=R_{n+1}^{2n+1}+R_{2n+2}^{4n+3}+\ldots+R_{(N+1)/2}^N.$ Заметим,
что $L$ также может быть представлено  как следующая линейная
оболочка компонент (равенство пространств легко показывается
взаимным включением друг в друга с учетом определения
$i$-компонент):
\begin{equation}
\label{Leq} L=\langle R^N_{n+1},R^N_{2n+2},\ldots
R^N_{(N+1)/2}\rangle .\end{equation}

Пространство $U=\{y\in H^N: u\cdotp y =0\}$ либо совпадает с кодом
Хэмминга $H^N$, либо является его гиперплоскостью, то есть
подпространством на единицу меньшей размерности. Заметим, что
всякая гиперплоскость кода Хэмминга $H^N$ пересекается с любым
линейным подкодом кода $H^N$ либо по половине его векторов, либо
является его надпространством.

 Отсюда имеем следующие случаи:

\begin{enumerate}
    \item $|U\cap L|=|L|/2;$
    \item $L\subset \{y\in
H^{N}: u\cdotp y =0\}.$
\end{enumerate}


Случай 1. Пусть $|U\cap L|=|L|/2$. Если $y$ пробегает произвольный
класс смежности $a+L$ по подкоду  $L$ в равенстве (\ref{Treq})
тогда, так как $\lambda_N$ постоянна на $L$ и, следовательно
постоянна на любом классе смежности по подкоду $L$,  левая часть
равенства (\ref{Treq}) принимает ровно половину нулей и  единиц
для  класса смежности $a+L$. Таким образом, функция $\lambda$
имеет одинаковое число нулей и единиц на коде
$C=V_{H^{(N-1)/2}}^{\lambda_{(N-1)/2}}$. Противоречие.

Случай 2. Пусть $L\subset U$. Тогда любой класс смежности $a+L$
является подмножеством $U$ или не пересекается с ним. Рассмотрим
равенство (\ref{Treq}) при условии, что  $y$ пробегает класс
смежности $\pi^{-1}_{y^{\prime}}(y')+R^N_{i}$, где $i$ кратно
$n+1$. В силу равенства (\ref{Leq}) и условия $L\subset U$, имеем

$$\lambda_N(\pi_{y^{\prime}}(R^N_{i}))=\lambda_N(R_{\pi_{y^{\prime}}(i)}^{N})=\lambda_N(\pi^{-1}_{y^{\prime}}(y'))+\lambda_N(y')+u\cdot \pi^{-1}_{y^{\prime}}(y').$$
 Другими словами, $\lambda_N$ принимает
постоянные значения на $R_{\pi_{y^{\prime}}(i)}^{N}$.

 Заметим, что в силу леммы \ref{pierce}, компонента $R_j^N$ протыкает  нули и единицы функции
 $\lambda_N$ тогда и только тогда, когда
  $j$ не кратно $(n+1)$.
 Отсюда заключаем, что $\pi_{y^{\prime}}(\{1\leq j\leq N: j\equiv 0 (mod (n+1))\})=(\{1\leq j\leq N: j\equiv 0 (mod
 (n+1))\})$, так как в противном случае $\lambda_N$ не будет
 постоянной на $R^{\pi(i)}_{N}$.

 Итак, учитывая равенство (\ref{Leq}), получаем $\pi(L)=L$. Поскольку код $C$ разбивается на классы смежности по
 $L$, представителями которого являются кодовые слова $H^n$,
 то подстановка $\pi$ переставляет эти классы смежности по $L$.

 Рассмотрим представителей классов смежности по $L$,
 являющихся кодовыми словами веса 3 кода  $H^n$. Заметим, что в классе
 смежности $a+L$ существуют векторы веса 3 тогда и только тогда, когда вектор $a$ имеет вес
 3. Таким образом подстановка $\pi_{y^{\prime}}$ переставляет классы смежности
 $\{i,j,k\}+L$, где  $\{i,j,k\}\in H^n$. Другими словами, найдется
 подстановка $\sigma_{y^{\prime}}$ из группы симметрий системы троек Штейнера
 кода $H^n$ такая, что
$$\pi_{y^{\prime}}(\{i,j,k\}+L)=\sigma_{y^{\prime}}(\{i,j,k\})+L.$$

  Откуда  в силу того, что группа симметрий
 кода Хэмминга совпадает с группой симметрий его системы троек Штейнера, заключаем, что $\pi_{y^{\prime}}(y+L)=\sigma_{y^{\prime}}(y)+L$
 для любого $y\in H^n.$
Тогда выполняется последовательность  равенств:
$\lambda_N(y'+\pi_{y^{\prime}}(y+L))=\lambda_N(y'+\sigma_{y^{\prime}}(y)+L)=\lambda(y'+\sigma_{y^{\prime}}(y)).$

Отсюда, подставляя $y$ и $y'$ из $H^n$ в равенство (\ref{Treq}) и
предполагая, что $u=(u',u'')$, где $u'\in F^n, u''\in F^{N-n}$,
получим
$$\lambda(y'+\sigma_{y^{\prime}}(y))=\lambda(y)+\lambda(y')+u'\cdot y,$$
что влечет  $y'\in Tr(V_{H^{n}}^{\lambda})$ согласно Теореме
\ref{rotn+1}.
\end{proof}

Из теоремы \ref{iterative_Homo}, лемм   \ref{HomoV3^{1}1}  и
\ref{TrV3^{1}1} получаем основной результат данной работы.

\begin{theorem}
Для любого $n\geq 15$ существуют  двоичные совершенные гомогенные
коды длины $n$, не являющиеся транзитивными.
\end{theorem}

{\bf Замечания.} Заметим, что для кодов Моллара имеет место
теорема, аналогичная теореме \ref{VasilievHomogenious} (см.,
например, определение кода Моллара в \cite{Sobsor2013}). Коды
Моллара являются обобщением кодов Васильева, теорема несложно
доказывается с учетом строения системы троек Штейнера кода
Моллара, которая подробно описана, например, в работе
\cite{MS2014Mollard}.

\begin{theorem}\label{MollardHomogenious}  Если $C$ -- произвольный гомогенный совершенный код длины $m$, $H$ -- код Хэмминга длины $r$, то код Моллара  $M(C,H)$ длины $mr + m + r$ при $f\equiv 0$ является гомогенным. \end{theorem}
Таким образом,  гомогенные совершенные коды можно строить,
используя конструкцию Моллара, хотя следует отметить, что
технически осуществить это будет существенно сложнее изложенного
выше.

\end{document}